\newtheorem{thm}{Theorem}
\newtheorem{prop}{Proposition}[section]
\newtheorem{cor}[prop]{Corollary}
\theoremstyle{definition}
\newtheorem{rem}[prop]{Remark}
\newtheorem{ex}[prop]{Example}
\newcommand{\N}{\mathbb{N}}
\newcommand{\Z}{\mathbb{Z}}
\newcommand{\C}{\mathbb{C}}
\newcommand{\st}{\;:\;}
\newcommand{\GL}{\mathrm{GL}}
\DeclareMathOperator{\im}{i}
\DeclareMathOperator{\imm}{im}
\DeclareMathOperator{\de}{d}
\newcommand{\del}{\partial}
\newcommand{\delbar}{\overline{\del}}
\title[On the $\del\delbar$-Lemma and Bott-Chern cohomology]
{On the $\del\delbar$-Lemma and Bott-Chern cohomology}
\author{Daniele Angella}
\address[Daniele Angella]{Dipartimento di Matematica ``Leonida Tonelli''\\
Universit\`{a} di Pisa \\
Largo Bruno Pontecorvo 5, 56127\\
Pisa, Italy}
\email{angella@mail.dm.unipi.it}
\author{Adriano Tomassini}
\address[Adriano Tomassini]{Dipartimento Di Matematica\\
Universit\`{a} di Parma \\
Parco Area delle Scienze 53/A, 43124 \\
Parma, Italy}
\email{adriano.tomassini@unipr.it}
\keywords{$\del\delbar$-Lemma; Bott-Chern cohomology; cohomology decomposition}
\thanks{This work was supported
by GNSAGA of INdAM}
\subjclass[2010]{32Q99}
\begin{document}

\vspace{-2cm}
\begin{minipage}[l]{10cm}
{\sffamily
  D. Angella, A. Tomassini, On the $\partial\overline\partial$-Lemma and Bott-Chern cohomology,
  {\em Invent. Math.} \textbf{192} (2013), no.~1, 71--81.

\smallskip

  \begin{flushright}\begin{footnotesize}
  (The original publication is available at \url{www.springerlink.com}.)
  \end{footnotesize}\end{flushright}
}
\end{minipage}
\vspace{2cm}

\begin{abstract}
On a compact complex manifold $X$, we prove a Fr\"olicher-type inequality for Bott-Chern cohomology and we show that the equality holds if and only if $X$ satisfies the $\del\delbar$-Lemma.
\end{abstract}

\maketitle

\section*{Introduction}
An important cohomological invariant for compact complex manifolds is provided by the Dolbeault cohomology. While in the compact K\"ahler case the Hodge decomposition theorem states that the Dolbeault cohomology groups give a decomposition of the de Rham cohomology, this holds no more true, in general, for non-K\"ahler manifolds.\\
Nevertheless, on a compact complex manifold $X$, the Hodge-Fr\"olicher spectral sequence $E^{\bullet,\bullet}_1\simeq H^{\bullet,\bullet}_{\delbar}(X)\Rightarrow H^\bullet_{dR}(X;\C)$, see \cite{frolicher}, links Dolbeault cohomology to de Rham cohomology, giving in particular the {\em Fr\"olicher inequality}:
$$ \text{for every }k\in\N\;,\qquad \sum_{p+q=k}\dim_\C H^{p,q}_{\delbar}(X) \;\geq\; \dim_\C H^{k}_{dR}(X;\C) \;.$$

Other important tools to study the geometry of compact complex (especially, non-K\"ahler) manifolds are the {\em Bott-Chern} and {\em Aeppli} cohomologies, that is,
$$ H^{\bullet,\bullet}_{BC}(X) \;:=\; \frac{\ker\del\cap\ker\delbar}{\imm\del\delbar} \qquad \text{ and }\qquad H^{\bullet,\bullet}_{A}(X) \;:=\; \frac{\ker\del\delbar}{\imm\del+\imm\delbar} \;.$$
While they coincide with the Dolbeault cohomology in the K\"ahler case, they supply further informations on the complex structure of a non-K\"ahler manifold. Fixing a Hermitian metric, as a consequence of the Hodge theory, one has that they are finite-dimensional $\C$-vector spaces and that there is an isomorphism between Bott-Chern and Aeppli cohomologies.\\
These cohomology groups have been recently studied by J.-M. Bismut in the context of Chern characters (see \cite{bismut}) and by L.-S. Tseng and S.-T. Yau in the framework of generalized geometry and type II string theory (see \cite{tseng-yau}).

A very special condition in complex geometry from the cohomological point of view is provided by the {\em $\del\delbar$-Lemma}: namely, a compact complex manifold is said to satisfy the $\del\delbar$-Lemma if every $\del$-closed, $\delbar$-closed, $\de$-exact complex form is $\del\delbar$-exact. For example, compact K\"ahler manifolds or, more in general, manifolds in {\em class $\mathcal{C}$ of Fujiki} (that is, compact complex manifolds admitting a proper K\"ahler modification) satisfy the $\del\delbar$-Lemma (see the paper by P. Deligne, Ph. Griffiths, J. Morgan and D. Sullivan, \cite{deligne-griffiths-morgan-sullivan}).

\medskip

In this note, we study the relations between Bott-Chern and Aeppli cohomologies and $\del\delbar$-Lemma.

More precisely, we prove the following result, stating a Fr\"olicher-type inequality also for Bott-Chern and Aeppli cohomologies and giving a characterization of the validity of the $\del\delbar$-Lemma just in terms of the dimensions of $H^{\bullet,\bullet}_{BC}(X;\C)$.

\smallskip
\noindent {\bfseries Theorem} (see Theorem \ref{thm:frol-bc} and Theorem \ref{thm:caratterizzazione-bc-numbers}){\bfseries.}\
{\itshape
 Let $X$ be a compact complex manifold. Then, for every $k\in\N$, the following inequality holds:
\begin{equation*}\tag{\ref{eq:frol-bc-2}}
\sum_{p+q=k} \left( \dim_\C H^{p,q}_{BC}(X)+\dim_\C H^{p,q}_{A}(X) \right) \;\geq\; 2\,\dim_\C H^k_{dR}(X;\C) \;.
\end{equation*}
Moreover, the equality in \eqref{eq:frol-bc-2} holds for every $k\in\N$ if and only if $X$ satisfies the $\del\delbar$-Lemma.
}
\smallskip

As a consequence of the previous theorem, we obtain another proof of the stability of the $\del\delbar$-Lemma under small deformations of the complex structure (see \cite{voisin, wu}), see Corollary \ref{cor:stab}.

\section{Preliminaries and notations}\label{sec:preliminaries}
In this section, we recall some notions and results we need in the sequel.\\
Let $X$ be a compact complex manifold of complex dimension $n$.

\medskip

The {\em Bott-Chern cohomology} of $X$ is the bi-graded algebra
$$ H^{\bullet,\bullet}_{BC}(X) \;:=\; \frac{\ker\del\cap\ker\delbar}{\imm\del\delbar} $$
and the {\em Aeppli cohomology} of $X$ is the bi-graded $H^{\bullet,\bullet}_{BC}(X)$-module
$$ H^{\bullet,\bullet}_{A}(X) \;:=\; \frac{\ker\del\delbar}{\imm\del+\imm\delbar} \;.$$

\medskip

The identity induces the natural maps of (bi-)graded $\C$-vector spaces
$$
\xymatrix{
 & H^{\bullet,\bullet}_{BC}(X) \ar[d]\ar[ld]\ar[rd] & \\
 H^{\bullet,\bullet}_{\del}(X) \ar[rd] & H^{\bullet}_{dR}(X;\C) \ar[d] & H^{\bullet,\bullet}_{\delbar}(X) \ar[ld] \\
 & H^{\bullet,\bullet}_{A}(X) &
}
$$
In general, the maps above are neither injective nor surjective. The compact complex manifold $X$ is said to {\em satisfy the $\del\delbar$-Lemma} if and only if
$$ \ker\del\cap\ker\delbar\cap\imm\de \;=\; \imm\del\delbar \;, $$
that is, if and only if the map $H^{\bullet,\bullet}_{BC}(X)\to H^{\bullet}_{dR}(X;\C)$ is injective. This turns out to be equivalent to say that all the maps above are isomorphisms, see \cite[Remark 5.16]{deligne-griffiths-morgan-sullivan}. As already reminded, compact K\"ahler manifolds and, more in general, compact complex manifolds in class $\mathcal{C}$ of Fujiki, \cite{fujiki}, satisfy the $\del\delbar$-Lemma, see \cite[Corollary 5.23]{deligne-griffiths-morgan-sullivan}.

\medskip

There is a Hodge theory also for Bott-Chern and Aeppli cohomologies, see \cite{schweitzer}. More precisely, fixed a Hermitian metric on $X$, one has that
$$ H^{\bullet,\bullet}_{BC}(X) \;\simeq\; \ker\tilde\Delta_{BC} \qquad \text{ and }\qquad H^{\bullet,\bullet}_{A}(X) \;\simeq\; \ker\tilde\Delta_{A} \;,$$
where
$$ \tilde\Delta_{BC} \;:=\;
\left(\del\delbar\right)\left(\del\delbar\right)^*+\left(\del\delbar\right)^*\left(\del\delbar\right)+\left(\delbar^*\del\right)\left(\delbar^*\del\right)^*+\left(\delbar^*\del\right)^*\left(\delbar^*\del\right)+\delbar^*\delbar+\del^*\del $$
and
$$ \tilde\Delta_{A} \;:=\; \del\del^*+\delbar\delbar^*+\left(\del\delbar\right)^*\left(\del\delbar\right)+\left(\del\delbar\right)\left(\del\delbar\right)^*+\left(\delbar\del^*\right)^*\left(\delbar\del^*\right)+\left(\delbar\del^*\right)\left(\delbar\del^*\right)^* $$
are $4$-th order elliptic self-adjoint differential operators. In particular, one gets that
$$ \dim_\C H^{\bullet,\bullet}_{\sharp}(X) \;<\;+\infty \qquad \text{ for } \sharp\in\left\{\delbar,\,\del,\,BC,\,A\right\} \;.$$
By the definition of the Laplacians, it follows that
\begin{eqnarray*}
u \;\in\; \ker\tilde\Delta_{BC} \quad &\Leftrightarrow& \quad \del u \;=\; \delbar u \;=\; \left(\del\delbar\right)^* u \;=\; 0 \\[5pt]
 & \Leftrightarrow & \quad \del^* \left(*u\right) \;=\; \delbar^* \left(*u\right) \;=\; \del\delbar \left(*u\right) \;=\; 0 \quad \Leftrightarrow \quad *u \;\in\; \ker\tilde\Delta_A
\end{eqnarray*}
and hence the duality
$$ *\colon H^{p,q}_{BC}(X) \stackrel{\simeq}{\to} H^{n-q,n-p}_{A}(X) \;,$$
for every $p,q\in\N$.

\medskip

As a matter of notation, for every $p,q\in\N$, for every $k\in\N$ and for $\sharp\in\left\{\delbar,\,\del,\,BC,\,A\right\}$, we will denote
$$ h^{p,q}_{\sharp} \;:=\; \dim_\C H^{p,q}_{\sharp}(X) \qquad \text{ and }\qquad h^{k}_{\sharp} \;:=\; \sum_{p+q=k}h^{p,q}_{\sharp} \;, $$
and we will denote the Betti numbers by
$$ b_k \;:=\; \dim_\C H^{k}_{dR}(X;\C) \;.$$

Recall that, by conjugation and by the duality induced by the Hodge-$*$-operator associated to a given Hermitian metric, for every $p,q\in\N$ and for every $k\in\N$, one has the following equalities:
$$ h^{p,q}_{BC} \;=\; h^{q,p}_{BC} \;=\; h^{n-p,n-q}_{A} \;=\; h^{n-q,n-p}_{A} \quad \text{ and }\quad h^{p,q}_{\delbar} \;=\; h^{q,p}
_{\del} \;=\; h^{n-p,n-q}_{\delbar} \;=\; h^{n-q,n-p}_{\del} \;,$$
and therefore
$$ h^k_{BC} \;=\; h^{2n-k}_A \quad \text{ and } \quad h^k_{\delbar} \;=\; h^k_{\del} \;=\; h^{2n-k}_{\delbar} \;=\; h^{2n-k}_{\del} \;;$$
lastly, recall that the Hodge-$*$-operator (of any given Riemannian metric on $X$) yields, for every $k\in\N$, the equality
$$ b_k \;=\; b_{2n-k} \;.$$

\section{Proof of Theorems \ref{thm:frol-bc} and \ref{thm:caratterizzazione-bc-numbers}}\label{sec:main}

In this section, we prove the main results, stating a Fr\"olicher-type inequality for Bott-Chern and Aeppli cohomologies and giving therefore a characterization of compact complex manifolds satisfying the $\del\delbar$-Lemma in terms of the dimensions of their Bott-Chern cohomology groups.

\medskip

First of all, we need to recall two exact sequences from \cite{varouchas}. Let $X$ be a compact complex manifold of complex dimension $n$. Following J. Varouchas, one defines the finite-dimensional bi-graded vector spaces
$$ A^{\bullet,\bullet} \;:=\; \frac{\imm\delbar\cap\imm\del}{\imm\del\delbar} \;,\quad B^{\bullet,\bullet}\;:=\; \frac{\ker\delbar\cap\imm\del}{\imm\del\delbar} \;,\quad C^{\bullet,\bullet}\;:=\; \frac{\ker\del\delbar}{\ker\delbar+\imm\del} $$
and
$$ D^{\bullet,\bullet} \;:=\; \frac{\imm\delbar\cap\ker\del}{\imm\del\delbar} \;,\quad E^{\bullet,\bullet}\;:=\; \frac{\ker\del\delbar}{\ker\del+\imm\delbar} \;,\quad F^{\bullet,\bullet}\;:=\; \frac{\ker\del\delbar}{\ker\delbar+\ker\del} \;.$$
For every $p,q\in\N$ and $k\in\N$, we will denote
$$ a^{p,q}\;:=\; \dim_\C A^{p,q}\;,\quad\ldots\;,\quad f^{p,q}\;:=\; \dim_\C F^{p,q} $$
and
$$ a^k\;:=\; \sum_{p+q=k}a^{p,q}\;,\quad \ldots\;,\quad f^k\;:=\; \sum_{p+q=k}f^{p,q}\;.$$
One has the following exact sequences, see \cite[\S3.1]{varouchas}:
\begin{equation}\label{eq:succesatta-1}
0 \to A^{\bullet,\bullet} \to B^{\bullet,\bullet} \to H^{\bullet,\bullet}_{\delbar}(X) \to H^{\bullet,\bullet}_{A}(X) \to C^{\bullet,\bullet} \to 0
\end{equation}
and
\begin{equation}\label{eq:succesatta-2}
0 \to D^{\bullet,\bullet} \to H^{\bullet,\bullet}_{BC}(X) \to H^{\bullet,\bullet}_{\delbar}(X) \to E^{\bullet,\bullet} \to F^{\bullet,\bullet} \to 0 \;.
\end{equation}
Note also (see \cite[\S3.1]{varouchas})
that the conjugation and the maps $\delbar\colon C^{\bullet,\bullet}\stackrel{\simeq}{\to}D^{\bullet,\bullet+1}$ and $\del\colon E^{\bullet,\bullet}\stackrel{\simeq}{\to} B^{\bullet+1,\bullet}$ induce, for every $p,q\in\N$, the equalities
\begin{equation}\label{eq:apq=aqp}
a^{p,q}\;=\;a^{q,p}\;, \qquad f^{p,q} \;=\; f^{q,p}\;, \qquad d^{p,q}\;=\;b^{q,p}\;, \qquad e^{p,q}\;=\;c^{q,p}
\end{equation}
and
$$ c^{p,q}\;=\;d^{p,q+1}\;,\qquad e^{p,q}\;=\;b^{p+1,q}\;,$$
from which one gets, for every $k\in\N$, the equalities
$$ d^k\;=\;b^k\;,\qquad e^k\;=\;c^k\qquad \text{ and } \qquad c^k\;=\; d^{k+1}\;,\qquad e^k\;=\;b^{k+1}\;.$$

\begin{rem}
 Note that the argument used to prove the duality between Bott-Chern and Aeppli cohomology groups, see \cite{schweitzer}, can be applied to show also the dualities between $A^{\bullet,\bullet}$ and $F^{\bullet,\bullet}$ and between $C^{\bullet,\bullet}$ and $D^{\bullet,\bullet}$.
\end{rem}

\medskip

We can now state a Fr\"olicher-type inequality for Bott-Chern and Aeppli cohomologies.
While, on every compact complex manifold, one has the Fr\"olicher inequality $h^k_{\delbar}\geq b_k$ for every $k\in\N$, see \cite{frolicher}, this holds no more true for $h^k_{BC}$, as the following example shows.

\begin{ex}\label{ex:iwasawa}
Let $\mathbb{H}(3;\mathbb{C})$ be the $3$-dimensional complex \emph{Heisenberg group} defined by
$$
\mathbb{H}(3;\mathbb{C}) := \left\{
\left(
\begin{array}{ccc}
 1 & z^1 & z^3 \\
 0 &  1  & z^2 \\
 0 &  0  &  1
\end{array}
\right) \in \mathrm{GL}(3;\mathbb{C}) \st z^1,\,z^2,\,z^3 \in\C \right\}
\;.
$$
Define the {\em Iwasawa manifold} as the $3$-dimensional compact complex manifold given by the quotient
$$ \mathbb{I}_3 := \left. \mathbb{H}\left(3;\Z\left[\im\right]\right) \right\backslash \mathbb{H}(3;\C)\;, $$
where $\mathbb{H}\left(3;\Z\left[\im\right]\right):=\mathbb{H}(3;\C)\cap\GL\left(3;\Z\left[\im\right]\right)$.\\
The Kuranishi space of $\mathbb{I}_3$ is smooth and depends on $6$ effective parameters, see \cite{nakamura}. According to I. Nakamura's classification, the small deformations
of $ \mathbb{I}_3$ are divided into three classes, {\itshape (i)}, {\itshape (ii)} and {\itshape (iii)}, in terms of their Hodge numbers: such classes
are explicitly described by means of polynomial (in)equalities in the
parameters, see \cite[\S 3]{nakamura}.\\
The dimensions of the Bott-Chern and Aeppli cohomology groups for $\mathbb{I}_3$ are computed in \cite[Proposition 1.2]{schweitzer}, for the small deformations of $\mathbb{I}_3$ are computed in \cite[\S 5.3]{angella} (we refer to it for more details). It turns out that the Bott-Chern cohomology yields a finer classification of the Kuranishi space of $ \mathbb{I}_3$. More precisely, $h^{2,2}_{BC}$ assumes different values within class {\itshape (ii)}, respectively class {\itshape (iii)}, according to the rank of a certain matrix whose entries are related to
the complex structure equations with respect to a suitable co-frame (see \cite[\S 4.2]{angella}), whereas the numbers corresponding to class {\itshape (i)} coincide with those for $ \mathbb{I}_3$: this allows a further subdivision of  classes {\itshape (ii)} and {\itshape (iii)} into subclasses {\itshape (ii.a)}, {\itshape (ii.b)}, and {\itshape (iii.a)}, {\itshape (iii.b)}. For the sake of completeness, we list here these numbers.

\smallskip
\begin{small}
\begin{table}
\begin{center}
\begin{tabular}{c||*{3}{c}|*{3}{c}|*{3}{c}|*{3}{c}|*{3}{c}||}
\toprule
  {\bfseries classes} & $\mathbf{h^1_{\delbar}}$ & $\mathbf{h^1_{BC}}$ & $\mathbf{h^1_{A}}$ & $\mathbf{h^2_{\delbar}}$ & $\mathbf{h^2_{BC}}$ & $\mathbf{h^2_{A}}$ & $\mathbf{h^3_{\delbar}}$ & $\mathbf{h^3_{BC}}$ & $\mathbf{h^3_{A}}$ & $\mathbf{h^4_{\delbar}}$ & $\mathbf{h^4_{BC}}$ & $\mathbf{h^4_{A}}$ & $\mathbf{h^5_{\delbar}}$ & $\mathbf{h^5_{BC}}$ & $\mathbf{h^5_{A}}$\\
\midrule[0.02em]\midrule[0.02em]
{\itshape (i)} & 5 & 4 & 6 & 11 & 10 & 12 & 14 & 14 & 14 & 11 & 12 & 10 & 5 & 6 & 4\\
\midrule[0.02em]
{\itshape (ii.a)} & 4 & 4 & 6 & 9 & 8 & 11 & 12 & 14 & 14 & 9 & 11 & 8 & 4 & 6 & 4\\
{\itshape (ii.b)} & 4 & 4 & 6 & 9 & 8 & 10 & 12 & 14 & 14 & 9 & 10 & 8 & 4 & 6 & 4\\
\midrule[0.02em]
{\itshape (iii.a)} & 4 & 4 & 6 & 8 & 6 & 11 & 10 & 14 & 14 & 8 & 11 & 6 & 4 & 6 & 4\\
{\itshape (iii.b)} & 4 & 4 & 6 & 8 & 6 & 10 & 10 & 14 & 14 & 8 & 10 & 6 & 4 & 6 & 4\\
\midrule[0.02em]\midrule[0.02em]
 & \multicolumn{3}{c|}{$\mathbf{b_1=4}$} & \multicolumn{3}{c|}{$\mathbf{b_2=8}$} & \multicolumn{3}{c|}{$\mathbf{b_3=10}$} & \multicolumn{3}{c|}{$\mathbf{b_4=8}$} & \multicolumn{3}{c||}{$\mathbf{b_5=4}$} \\
\bottomrule
\end{tabular}
\end{center}
\caption{Dimensions of the cohomologies of the Iwasawa manifold and of its small deformations.}
\end{table}
\end{small}
\smallskip

\end{ex}

Nevertheless, we can prove the following.

\begin{thm}\label{thm:frol-bc}
 Let $X$ be a compact complex manifold of complex dimension $n$. Then, for every $p,q\in\N$, the following inequality holds:
\begin{equation}\label{eq:frol-bc-1}
h^{p,q}_{BC} + h^{p,q}_{A} \;\geq\; h^{p,q}_{\delbar}+h^{p,q}_\del \;.
\end{equation}
In particular, for every $k\in\N$, the following inequality holds:
\begin{equation}\label{eq:frol-bc-2}
h^k_{BC}+h^k_{A} \;\geq\; 2\,b_k \;,
\end{equation}
where $h^k_{BC}:=\sum_{p+q=k}\dim_\C H^{p,q}_{BC}(X)$ and $h^k_{A}:=\sum_{p+q=k}\dim_\C H^{p,q}_{A}(X)$.
\end{thm}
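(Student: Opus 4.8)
The plan is to turn Varouchas' exact sequences into numerical identities via Euler characteristics and then cancel the auxiliary dimensions. Since the alternating sum of the dimensions along any exact sequence of finite-dimensional vector spaces vanishes, exactness of \eqref{eq:succesatta-1} and \eqref{eq:succesatta-2} gives, for every $p,q\in\N$,
\[
h^{p,q}_A \;=\; h^{p,q}_{\delbar}+a^{p,q}-b^{p,q}+c^{p,q} \qquad\text{and}\qquad h^{p,q}_{BC} \;=\; h^{p,q}_{\delbar}+d^{p,q}-e^{p,q}+f^{p,q} \;.
\]

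Next I would produce the ``conjugate'' sequences. Applying complex conjugation to \eqref{eq:succesatta-1} and \eqref{eq:succesatta-2}, and using the identifications $\overline{A^{p,q}}=A^{q,p}$, $\overline{B^{p,q}}=D^{q,p}$, $\overline{C^{p,q}}=E^{q,p}$, $\overline{F^{p,q}}=F^{q,p}$ (compatible with \eqref{eq:apq=aqp}) together with $\overline{H^{p,q}_{\delbar}}=H^{q,p}_{\del}$, one obtains, after relabelling $(q,p)$ as $(p,q)$, two further exact sequences involving $H^{\bullet,\bullet}_{\del}$ in place of $H^{\bullet,\bullet}_{\delbar}$. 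The same Euler-characteristic computation then yields
\[
h^{p,q}_A \;=\; h^{p,q}_{\del}+a^{p,q}-d^{p,q}+e^{p,q} \qquad\text{and}\qquad h^{p,q}_{BC} \;=\; h^{p,q}_{\del}+b^{p,q}-c^{p,q}+f^{p,q} \;.
\]

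Now I would sum the two expressions for $h^{p,q}_{BC}$, sum the two for $h^{p,q}_A$, and add the results: the four terms $b^{p,q},c^{p,q},d^{p,q},e^{p,q}$ cancel in pairs (namely $b+d$ against $b+d$, and $c+e$ against $c+e$), leaving the exact identity
\[
h^{p,q}_{BC}+h^{p,q}_A \;=\; h^{p,q}_{\delbar}+h^{p,q}_{\del}+a^{p,q}+f^{p,q} \;.
\]
Since $a^{p,q}=\dim_\C A^{p,q}\geq 0$ and $f^{p,q}=\dim_\C F^{p,q}\geq 0$, inequality \eqref{eq:frol-bc-1} follows at once. Summing over $p+q=k$ and invoking the Fr\"olicher inequality $h^k_{\delbar}\geq b_k$ together with $h^k_{\del}=h^k_{\delbar}$ then gives $h^k_{BC}+h^k_A=h^k_{\delbar}+h^k_{\del}+a^k+f^k\geq 2\,b_k$, which is \eqref{eq:frol-bc-2}.

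The main obstacle is conceptual rather than computational: one must realize that the \emph{pointwise} refinement \eqref{eq:frol-bc-1} genuinely requires the conjugate sequences. Indeed, using only \eqref{eq:succesatta-1} and \eqref{eq:succesatta-2} one is left with the combination $(c^{p,q}-b^{p,q})+(d^{p,q}-e^{p,q})$, which by \eqref{eq:apq=aqp} is antisymmetric under $p\leftrightarrow q$ and need not be nonnegative for a fixed pair $(p,q)$; it cancels only after summing over $p+q=k$, which would suffice for \eqref{eq:frol-bc-2} but not for \eqref{eq:frol-bc-1}. The only point demanding genuine care is the verification of the conjugation identifications of the spaces $A^{\bullet,\bullet},\dots,F^{\bullet,\bullet}$, i.e.\ that complex conjugation carries each Varouchas sequence to the expected $\del$-analogue with the indices transposed.
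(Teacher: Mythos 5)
Your proof is correct and follows essentially the same route as the paper: both combine Varouchas' exact sequences with conjugation symmetry to obtain the identity $h^{p,q}_{BC}+h^{p,q}_{A}=h^{p,q}_{\delbar}+h^{p,q}_{\del}+a^{p,q}+f^{p,q}$, from which \eqref{eq:frol-bc-1} and \eqref{eq:frol-bc-2} follow. The only difference is bookkeeping: the paper evaluates sequence \eqref{eq:succesatta-1} at the transposed bidegree $(q,p)$ and invokes the equalities \eqref{eq:apq=aqp} (which encode exactly the conjugation identifications you verify), whereas you conjugate the sequences themselves and average four Euler-characteristic identities --- the same cancellation in disguise.
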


\begin{proof}
Fix $p,q\in\N$;  using the symmetries $h^{p,q}_A=h^{q,p}_A$ and $h^{p,q}_{\delbar}=h^{q,p}_{\del}$, the exact sequences \eqref{eq:succesatta-1} and \eqref{eq:succesatta-2} and the equalities \eqref{eq:apq=aqp}, we have
\begin{eqnarray*}
h^{p,q}_{BC}+h^{p,q}_{A} &=& h^{p,q}_{BC}+h^{q,p}_{A} \\[5pt]
&=& h^{p,q}_{\delbar} + h^{q,p}_{\delbar} + f^{p,q} + a^{q,p} + d^{p,q} - b^{q,p} - e^{p,q} + c^{q,p} \\[5pt]
&=& h^{p,q}_{\delbar} + h^{p,q}_{\del} + f^{p,q} + a^{p,q}  \\[5pt]
&\geq& h^{p,q}_{\delbar} + h^{p,q}_{\del} \;,
\end{eqnarray*}
which proves \eqref{eq:frol-bc-1}.\\
Now, fix $k\in\N$; summing over $\left(p,q\right)\in\N\times\N$ such that $p+q=k$, we get
\begin{eqnarray*}
 h^k_{BC} + h^k_{A} &=& \sum_{p+q=k} \left( h^{p,q}_{BC} + h^{p,q}_{A} \right) \\[5pt]
&\geq& \sum_{p+q=k} \left( h^{p,q}_{\delbar} + h^{p,q}_{\del}\right) \;=\; h^k_{\delbar} + h^k_{\del} \\[5pt]
&\geq& 2\, b_k \;,
\end{eqnarray*}
from which we get \eqref{eq:frol-bc-2}.
\end{proof}

\begin{rem}
 Note that small deformations of the Iwasawa manifold in Example \ref{ex:iwasawa} show that both the inequalities \eqref{eq:frol-bc-1} and \eqref{eq:frol-bc-2} can be strict.
\end{rem}

\begin{rem}
 Note that we have actually proved that, for every $k\in\N$,
\begin{equation*}
 h^k_{BC}+h^k_{A} \;=\; 2\,h^k_{\delbar}+a^k+f^k \;.
\end{equation*}
\end{rem}

\medskip

We prove now that equality in \eqref{eq:frol-bc-2} holds for every $k\in\N$ if and only if the $\del\delbar$-Lemma holds; in particular, this gives a characterization of the validity of the $\del\delbar$-Lemma just in terms of $\left\{h^k_{BC}\right\}_{k\in\N}$.

\begin{thm}\label{thm:caratterizzazione-bc-numbers}
 Let $X$ be a compact complex manifold. The equality
$$ h^k_{BC}+h^k_{A} \;=\; 2\,b_k $$
in \eqref{eq:frol-bc-2} holds for every $k\in\N$ if and only if $X$ satisfies the $\del\delbar$-Lemma.
\end{thm}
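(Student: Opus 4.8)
The plan is to treat the two implications separately, the direct one being formal and the converse carrying the real content. Assume first that $X$ satisfies the $\del\delbar$-Lemma. Then it is classical (see \cite{deligne-griffiths-morgan-sullivan}) that the Fr\"olicher spectral sequence degenerates at $E_1$, so $h^k_{\delbar}=b_k$ for every $k$, and that all the Varouchas spaces vanish, in particular $a^k=f^k=0$; the identity $h^k_{BC}+h^k_A=2\,h^k_{\delbar}+a^k+f^k$ of the Remark above then yields $h^k_{BC}+h^k_A=2\,b_k$ immediately.

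For the converse, suppose $h^k_{BC}+h^k_A=2\,b_k$ for every $k$, and extract three consequences. By the same identity together with Fr\"olicher's inequality $h^k_{\delbar}\geq b_k$, the chain $2\,b_k=h^k_{BC}+h^k_A=2\,h^k_{\delbar}+a^k+f^k\geq 2\,b_k+a^k+f^k$ forces $a^k=f^k=0$ and $h^k_{\delbar}=b_k$ for every $k$. Since $a^k=\sum_{p+q=k}a^{p,q}$ and $f^k=\sum_{p+q=k}f^{p,q}$ are sums of nonnegative integers, this gives $A^{p,q}=0$ and $F^{p,q}=0$ for all $p,q$, that is $\imm\del\cap\imm\delbar=\imm\del\delbar$ and $\ker\del\delbar=\ker\del+\ker\delbar$; and $h^k_{\delbar}=b_k$ for all $k$ means that the Fr\"olicher spectral sequence degenerates at $E_1$, so that its first differential, induced by $\del$, vanishes, i.e. $\del(\ker\delbar)\subseteq\imm\delbar$.

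The main step, which I expect to be the real obstacle, is to combine these three facts into the single identity $\imm\del\cap\ker\delbar=\imm\del\delbar$, the vanishing of $B^{\bullet,\bullet}$, which is one of the classical reformulations of the $\del\delbar$-Lemma (see \cite{deligne-griffiths-morgan-sullivan}). Given $\xi\in\imm\del\cap\ker\delbar$, write $\xi=\del u$; then $\del\delbar u=-\delbar\del u=-\delbar\xi=0$, so $u\in\ker\del\delbar$, and $F=0$ lets me decompose $u=u_1+u_2$ with $\del u_1=0$ and $\delbar u_2=0$, whence $\xi=\del u_2$ with $u_2\in\ker\delbar$. The degeneration $\del(\ker\delbar)\subseteq\imm\delbar$ then forces $\xi\in\imm\delbar$, so that $\xi\in\imm\del\cap\imm\delbar$, and $A=0$ finally gives $\xi\in\imm\del\delbar$; as the reverse inclusion is obvious, this proves $B^{\bullet,\bullet}=0$ and hence the $\del\delbar$-Lemma.

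It is worth stressing that the three ingredients interlock and are all needed: $F=0$ replaces an arbitrary primitive $u$ of $\xi$ by a $\delbar$-closed one, the $E_1$-degeneration turns its $\del$-exactness into $\delbar$-exactness, and $A=0$ merges the two exactness properties into $\del\delbar$-exactness. Once $B^{\bullet,\bullet}=0$ is established, the symmetries $d^{p,q}=b^{q,p}$, $c^{p,q}=d^{p,q+1}$ and $e^{p,q}=b^{p+1,q}$ from \eqref{eq:apq=aqp} make $C^{\bullet,\bullet}$, $D^{\bullet,\bullet}$ and $E^{\bullet,\bullet}$ vanish too, in agreement with the $\del\delbar$-Lemma being equivalent to the vanishing of all six Varouchas spaces.
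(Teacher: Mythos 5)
Your proof is correct, and for the converse (hard) implication it takes a genuinely different route from the paper's. The two arguments share the first step (the paper's \texttt{Claim 1}): the assumed equality, Fr\"olicher's inequality and the identity $h^k_{BC}+h^k_A=2\,h^k_{\delbar}+a^k+f^k$ force $a^k=f^k=0$ and $h^k_{\delbar}=b_k$, hence $E_1$-degeneration. From there the paper never considers $B^{\bullet,\bullet}$ at all: it proves (\texttt{Claim 2}) that $a^{k+1}=0$ alone makes $\bigoplus_{p+q=k}H^{p,q}_{BC}(X)\to H^k_{dR}(X;\C)$ surjective, by explicitly correcting a $\de$-closed form by an exact form $\de\eta$ so that all pure-type components become $\del$- and $\delbar$-closed, and then (\texttt{Claim 3}) upgrades the resulting inequality $h^k_{BC}\geq b_k$ to $h^k_{BC}=b_k$ via the dualities $h^k_{BC}=h^{2n-k}_A$ and $b_k=b_{2n-k}$; a surjection between spaces of equal dimension is an isomorphism, in particular injective, which is the paper's definition of the $\del\delbar$-Lemma. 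You instead stay entirely inside the double complex: the chase $\xi=\del u$, then $u=u_1+u_2$ via $F=0$, then $\del(\ker\delbar)\subseteq\imm\delbar$ via $E_1$-degeneration, then $A=0$, correctly yields $B^{\bullet,\bullet}=0$, and each of these three steps is sound. What your route buys: it avoids both the representative-correcting construction of \texttt{Claim 2} and the Hodge-theoretic duality $h^k_{BC}=h^{2n-k}_A$ (which is exactly where the paper leans on Schweitzer's $*$-duality), and it makes transparent which Varouchas spaces do the work; the paper's route, in exchange, uses only facts it has explicitly recalled. The one point you should patch: the paper defines the $\del\delbar$-Lemma by $\ker\del\cap\ker\delbar\cap\imm\de=\imm\del\delbar$, equivalently injectivity of $H^{\bullet,\bullet}_{BC}(X)\to H^{\bullet}_{dR}(X;\C)$, and the equivalence of this with $B^{\bullet,\bullet}=0$, while classical, is stated nowhere in the paper, so citing \cite{deligne-griffiths-morgan-sullivan} leaves a (small) seam; it is worth closing it explicitly, which takes three lines. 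By conjugation, $B^{\bullet,\bullet}=0$ gives $D^{\bullet,\bullet}=0$ (this is $d^{p,q}=b^{q,p}$ from \eqref{eq:apq=aqp}). Now let $\alpha\in\ker\del\cap\ker\delbar$ with $\alpha=\de\beta$; since $\ker\del\cap\ker\delbar$ is bigraded, each pure-type component $\alpha^{p,q}$ is $\del$- and $\delbar$-closed, and comparing types in $\alpha=\de\beta$ gives $\alpha^{p,q}=\del\beta^{p-1,q}+\delbar\beta^{p,q-1}$. Then $\delbar\del\beta^{p-1,q}=\delbar\alpha^{p,q}=0$, so $\del\beta^{p-1,q}\in\imm\del\cap\ker\delbar=\imm\del\delbar$, and symmetrically $\del\delbar\beta^{p,q-1}=\del\alpha^{p,q}=0$, so $\delbar\beta^{p,q-1}\in\imm\delbar\cap\ker\del=\imm\del\delbar$; hence every $\alpha^{p,q}$, and so $\alpha$ itself, lies in $\imm\del\delbar$. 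With that bridge added, your argument is a complete, self-contained alternative to \texttt{Claims 2} and \texttt{3} of the paper.
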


\begin{proof}
 Obviously, if $X$ satisfies the $\del\delbar$-Lemma, then, for every $k\in\N$, one has
$$ h^k_{BC} \;=\; h^k_{A} \;=\; h^k_{\delbar} \;=\; b_k $$
and hence, in particular,
$$ h^k_{BC}+h^k_{A} \;=\; 2\,b_k \;.$$
 We split the proof of the converse into the following claims.
\smallskip
 \paragraph{\texttt{\bfseries Claim 1.}} {\itshape If $h^k_{BC}+h^k_{A}=2\,b_k$ holds for every $k\in\N$, then $E_1\simeq E_{\infty}$ and $a^k=0=f^k$ for every $k\in\N$.}\\
Since, for every $k\in\N$, we have
$$ 2\,b_k \;=\; h^k_{BC}+h^k_{A} \;=\; 2\,h^k_{\delbar}+a^k+f^k \;\geq\; 2\,b_k \;,$$
then $h^k_{\delbar}=b_k$ and $a^k=0=f^k$ for every $k\in\N$.
\smallskip
 \paragraph{\texttt{\bfseries Claim 2.}} {\itshape  Fix $k\in\N$. If $a^{k+1}:=\sum_{p+q=k+1}\dim_\C A^{p,q}=0$, then the natural map
$$
\bigoplus_{p+q=k}H^{p,q}_{BC}(X)\to H^{k}_{dR}(X;\C)
$$
is surjective.}\\
 Let $\mathfrak{a}=\left[\alpha\right]\in H^k_{dR}(X;\C)$. We have to prove that $\mathfrak{a}$ admits a representative whose pure-type components are $\de$-closed. Consider the pure-type decomposition of $\alpha$:
$$ \alpha \;=:\; \sum_{j=0}^{k} \left(-1\right)^j\, \alpha^{k-j,j} \;,$$
where $\alpha^{k-j,j}\in\wedge^{k-j,j}X$. Since $\de\alpha=0$, we get that
$$ \del\alpha^{k,0}=0\;,\qquad \delbar\alpha^{k-j,j}-\del\alpha^{k-j-1,j+1}=0\text{ for }j\in\{0,\ldots,k-1\}\;,\qquad \delbar\alpha^{0.k}=0 \;; $$
by the hypothesis $a^{k+1}=0$, for every $j\in\{0,\ldots,k-1\}$, we get that,
$$ \delbar\alpha^{k-j,j}\;=\;\del\alpha^{k-j-1,j+1}\;\in\;\left(\imm\delbar\cap\imm\del\right)\cap\wedge^{k-j, j+1}X \;=\; \imm\del\delbar\cap\wedge^{k-j, j+1}X $$
and hence there exists $\eta^{k-j-1,j}\in\wedge^{k-j-1,j}X$ such that
$$ \delbar\alpha^{k-j,j} \;=\; \del\delbar\eta^{k-j-1,j} \;=\; \del\alpha^{k-j-1,j+1} \;.$$
Define
$$ \eta \;:=\; \sum_{j=0}^{k-1}\left(-1\right)^j\,\eta^{k-j-1,j} \;\in\;\wedge^{k-1}(X;\C) \;.$$
The claim follows noting that
\begin{eqnarray*}
\mathfrak{a} &=& \left[\alpha\right] \;=\; \left[\alpha+\de\eta\right] \\[5pt]
&=& \left[\left(\alpha^{k,0}+\del\eta^{k-1,0}\right)+\sum_{j=1}^{k-1}
\left(-1\right)^{j}\,\left(\alpha^{k-j,j}+\del\eta^{k-j-1,j}-\delbar\eta^{k-j,j-1}\right) \right.\\[5pt]
&{}&+ \left.\left(-1\right)^k\,\left(\alpha^{0,k}-\delbar\eta^{0,k-1}\right)\right] \\[5pt]
&=& \left[\alpha^{k,0}+\del\eta^{k-1,0}\right]+\sum_{j=1}^{k-1}\left(-1\right)^{j}\,\left[\alpha^{k-j,j}+\del\eta^{k-j-1,j}-\delbar\eta^{k-j,j-1}\right]\\[5pt]
&{}&+ \left(-1\right)^k\,\left[\alpha^{0,k}-\delbar\eta^{0,k-1}\right] \;,
\end{eqnarray*}
that is, each of the pure-type components of $\alpha+\de\eta$ is both $\del$-closed and $\delbar$-closed.
\smallskip
 \paragraph{\texttt{\bfseries Claim 3.}} {\itshape If $h^k_{BC}\geq b_k$ and $h^k_{BC}+h^k_{A}=2\,b_k$ for every $k\in\N$, then $h^k_{BC}=b_k$ for every $k\in\N$.}\\
If $n$ is the complex dimension of $X$, then, for every $k\in\N$, we have
$$ b_k \;\leq\; h^k_{BC} \;=\; h^{2n-k}_{A} \;=\; 2\,b_{2n-k}-h^{2n-k}_{BC} \;\leq\; b_{2n-k} \;=\; b_k$$
and hence $h^k_{BC}=b_k$ for every $k\in\N$.
\smallskip

Now, by \texttt{Claim 1}, we get that $a^k=0$ for each $k\in\N$; hence, by \texttt{Claim 2}, for every $k\in\N$ the map
$$ \bigoplus_{p+q=k}H^{p,q}_{BC}(X)\to H^k_{dR}(X;\C) $$
is surjective and hence, in particular, $h^k_{BC}\geq b_k$. By \texttt{Claim 3} we get therefore that $h^k_{BC}=b_k$ for every $k\in\N$. Hence, the natural map $H^{\bullet,\bullet}_{BC}(X)\to H^{\bullet}_{dR}(X;\C)$ is actually an isomorphism, which is equivalent to say that $X$ satisfies the $\del\delbar$-Lemma.
\end{proof}

\begin{rem}
We note that, using the exact sequences \eqref{eq:succesatta-2} and \eqref{eq:succesatta-1}, one can prove that, on a compact complex manifold $X$ and for every $k\in\N$,
\begin{eqnarray*}
e^k &=& \left(h^{k}_{\delbar}-h^{k}_{BC}\right)+f^k+c^{k-1} \\[5pt]
&=& \left(h^{k}_{\delbar}-h^{k}_{BC}\right)-\left(h^{k-1}_{\delbar}-h^{k-1}_{A}\right) + f^k-a^{k-1}+e^{k-2} \;.
\end{eqnarray*}
\end{rem}

\begin{rem}
 Note that $E_1\simeq E_\infty$ is not sufficient to have the equality $h^k_{BC}+h^k_{A}=2\,b_k$ for every $k\in\N$: a counter-example is provided by small deformations of the Iwasawa manifold, see Example \ref{ex:iwasawa}.
\end{rem}

\medskip

Using Theorem \ref{thm:caratterizzazione-bc-numbers}, we get another proof of the following result (see, e.g., \cite{voisin, wu}).

\begin{cor}\label{cor:stab}
 Satisfying the $\del\delbar$-Lemma is a stable property under small deformations of the complex structure.
\end{cor}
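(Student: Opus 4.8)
The plan is to deduce the Corollary directly from the numerical characterization of the $\del\delbar$-Lemma given by Theorem \ref{thm:caratterizzazione-bc-numbers}, using the Fr\"olicher-type inequality of Theorem \ref{thm:frol-bc} together with the upper semi-continuity of the Bott-Chern and Aeppli numbers along a deformation. Let $\left\{X_t\right\}_t$ be a smooth family of compact complex structures on a fixed underlying differentiable manifold, with $X_0$ satisfying the $\del\delbar$-Lemma. First I would note that the Betti numbers $b_k$ are topological invariants, hence constant in $t$; by Theorem \ref{thm:caratterizzazione-bc-numbers} applied to the central fibre, one has $h^k_{BC}(X_0)+h^k_{A}(X_0)=2\,b_k$ for every $k\in\N$.

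Next I would invoke the elliptic theory for the $4$-th order operators $\tilde\Delta_{BC}$ and $\tilde\Delta_A$ recalled in \S\ref{sec:preliminaries}. Fixing a smooth family of Hermitian metrics, these operators depend smoothly on $t$ and are elliptic and self-adjoint, so that $h^{p,q}_{BC}(X_t)=\dim_\C\ker\tilde\Delta_{BC}$ and $h^{p,q}_{A}(X_t)=\dim_\C\ker\tilde\Delta_{A}$ are upper semi-continuous functions of $t$, exactly as in the Kodaira--Spencer semi-continuity theorem for $h^{p,q}_{\delbar}$. Consequently, for $t$ in a sufficiently small neighbourhood of $0$,
$$ h^k_{BC}(X_t)+h^k_{A}(X_t) \;\leq\; h^k_{BC}(X_0)+h^k_{A}(X_0) \;=\; 2\,b_k \qquad\text{for every }k\in\N \;. $$

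Finally I would combine this with the inequality \eqref{eq:frol-bc-2} of Theorem \ref{thm:frol-bc}, which holds on every compact complex manifold and gives $h^k_{BC}(X_t)+h^k_{A}(X_t)\geq 2\,b_k$. Sandwiching the two bounds yields $h^k_{BC}(X_t)+h^k_{A}(X_t)=2\,b_k$ for every $k\in\N$ and every $t$ near $0$, whence $X_t$ satisfies the $\del\delbar$-Lemma by the converse direction of Theorem \ref{thm:caratterizzazione-bc-numbers}. The main obstacle is precisely the semi-continuity input: one must know that the dimensions of $\ker\tilde\Delta_{BC}$ and $\ker\tilde\Delta_A$ behave upper semi-continuously under deformation. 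This is not visible from the bare definition of the cohomology groups, but follows from the general theory of smoothly varying families of elliptic self-adjoint operators; once it is granted, the remainder of the argument is purely formal.
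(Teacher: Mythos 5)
Your proof is correct and follows essentially the same route as the paper: upper semi-continuity of the Bott--Chern and Aeppli numbers (which the paper cites from Schweitzer, and you justify via elliptic theory for $\tilde\Delta_{BC}$ and $\tilde\Delta_{A}$, the same underlying fact), constancy of the Betti numbers, and the sandwich between inequality \eqref{eq:frol-bc-2} and the numerical characterization of Theorem \ref{thm:caratterizzazione-bc-numbers}. Your write-up merely makes explicit the sandwich step that the paper leaves implicit.
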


\begin{proof}
 Let $\left\{X_t\right\}_t$ be a complex-analytic family of compact complex manifolds. Since, for every $k\in\N$, the dimensions $h^{k}_{BC}(X_t)$ are upper-semi-continuous functions at $t$ (see, e.g., \cite{schweitzer}), while the dimensions $b_k(X_t)$ are constant in $t$, one gets that, if $X_{t_0}$ satisfies the equality $h^k_{BC}\left(X_{t_0}\right)+h^k_{A}\left(X_{t_0}\right)=2\,b_k\left(X_{t_0}\right)$ for every $k\in\N$, the same holds true for $X_t$ with $t$ near $t_0$.
\end{proof}

It could be interesting to construct a compact complex manifold (of any complex dimension greater or equal to $3$) such that $E_1\simeq E_{\infty}$ and $h^{p,q}_{\delbar}=h^{p,q}_{\del}$ for every $p,q\in\N$ but for which the $\del\delbar$-Lemma does not hold.\\
A compact complex manifold $X$ whose double complex $\left(\wedge^{\bullet,\bullet}X,\,\del,\,\delbar\right)$ has the form in Figure \ref{fig:conj} provides such an example.
\vfill\eject

\smallskip
\begin{figure}[h]
 \centering
 \includegraphics{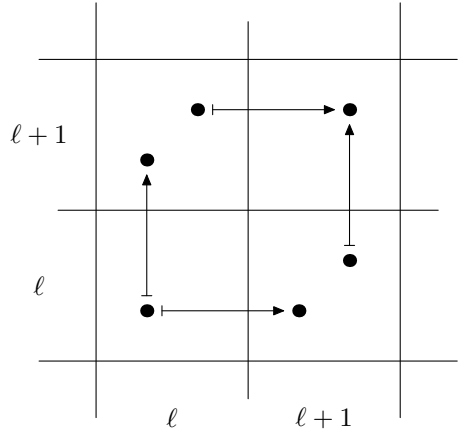}
 \caption{An abstract example}
 \label{fig:conj}
\end{figure}
\smallskip

\bigskip

\noindent{\sl Acknowledgments.}
The authors would like to thank warmly Jean-Pierre Demailly, for his support and encouragement and for his hospitality at Institut Fourier in Grenoble, and Greg Kuperberg, for many interesting conversations at Institut Fourier.
The authors would like also to thank Lucia Alessandrini for her interest and for many useful conversations.

\end{document}